\numberwithin{equation}{section}
\tikzset{>=latex}
\newcolumntype{L}{>{\arraybackslash}X}
\theoremstyle{plain}
\newtheorem{theorem}{Theorem}[section]
\theoremstyle{remark}
\newtheorem{remark}[theorem]{Remark}
\newtheorem{example}[theorem]{Example}
\theoremstyle{plain}
\numberwithin{equation}{section}
\def\N{{\mathbb N}}
\def\R{{\mathbb R}}
\newcommand{\E}{{\mathbf E}}
\newcommand{\F}{{\mathscr F}}
\newcommand{\Do}{\mathsf{D}}
\newcommand{\dd}{\mathrm{d}}
\newcommand{\Dom}{\mathcal{O}}
\newcommand{\embed}{\hookrightarrow}
\newcommand{\A}{\mathcal{A}}
\newcommand{\B}{\mathcal{B}}
\newcommand{\W}{\beta}
\begin{document}

\author{Antonio Agresti}
\address{Department of Mathematics Guido Castelnuovo, Sapienza University of Rome,
P.le Aldo Moro 5, 00185 Rome, Italy}
\email{antonio.agresti@uniroma1.it}

\author{Mark Veraar}
\address{Delft Institute of Applied Mathematics\\
Delft University of Technology \\ P.O. Box 5031\\ 2600 GA Delft\\The
Netherlands} \email{M.C.Veraar@tudelft.nl}

\thanks{The first author is a member of GNAMPA (IN$\mathrm{d}$AM). The second author has received funding from the VICI subsidy VI.C.212.027 of the Netherlands Organisation for Scientific Research (NWO)}

\date\today

\title[A threshold for temporal regularity of stochastic PDE\lowercase{s}]{A Note on a threshold for temporal regularity\\ of stochastic PDE\lowercase{s}}

\keywords{Time Regularity, Parabolic stochastic PDEs, fractional smoothness}

\subjclass[2020]{Primary: 60H15, Secondary: 35B65, 35R60}

\begin{abstract}
We consider solutions to linear parabolic SPDEs of the form
$$
\dd u(t) + A u(t)\,\dd t = g(t)\, \dd \W, \qquad u(0)=0,
$$
where $A$ is a positive, invertible, and self-adjoint operator on a Hilbert space $X$, $\W$ is a one-dimensional Brownian motion, and $g(t)\equiv x\in X$. We show that, for all $\alpha\in [0,\frac{1}{2})$, 
$$
u\in L^2(\Omega;W^{\alpha,2}(0,T;\Do(A^{1/2}))) \quad \text{ if and only if }\quad x\in \Do(A^{\alpha}).
$$ 
In particular, there is a lack of persistence of temporal regularity from the diffusion coefficient $g$ to the solution, and additional spatial regularity is required to improve time regularity. In particular, this provides a counterexample to a conjectured time-regularity property for monotone stochastic evolution equations posed by D.\ Breit and M.\ Hofmanov\'a in [C.\ R.\ Math.\ Acad.\ Sci.\ Paris 354 (2016), 33–37].
\end{abstract}

\maketitle

\section{Introduction}
In this paper, we consider parabolic stochastic evolution equations of the form
\begin{equation}\label{eq:SEE}
\left\{
\begin{aligned}
&\dd u(t)   +A u(t)\, \dd t = g(t)\, \dd \W(t) \quad \text{ on }[0,T],
\\ & u(0) = 0,
\end{aligned}
\right.
\end{equation}
where $A$ is a positive, invertible, and self-adjoint operator on a Hilbert space $X$, $\W=(\W(t))_{t\geq 0}$ is a one-dimensional Brownian motion, $g$ is a progressively measurable process with values in $X$ (or in a subspace thereof), and $T\in (0,\infty)$ is the time-horizon. 

It is a well-known fact in the theory of stochastic evolution equations 
\begin{equation}
\label{eq:zero_regularity_statement}
\E\|u\|_{L^2(0,T;\Do(A^{1/2}))}^2 \lesssim_A \E\|g\|_{L^2(0,T;X)}^2,
\end{equation}
where $\E$ denotes the expected value, and the domain of the square root $A^{1/2}$ can be defined via the spectral theorem for self-adjoint operators.
The estimate \eqref{eq:zero_regularity_statement} was first proven by Da Prato in \cite{DPZ82}, see also \cite[Theorem 6.12(2)]{DPZ} or \cite[Theorem 3.13]{AV25_survey}. 
It is natural to ask whether the regularity of the solution $u$ can be improved by assuming additional time regularity of $g$. Namely, if for some $\alpha\in [0,\frac{1}{2})$, the following variant of \eqref{eq:zero_regularity_statement} holds:
\begin{equation}
\tag{P}
\label{eq:alpha_regularity_statement}
\E\|u\|_{W^{\alpha,2}(0,T;\Do(A^{1/2}))}^2 \lesssim_{\alpha,A,T} \E\|g\|_{W^{\alpha,2}(0,T;X)}^2 ,
\end{equation}
where $W^{\alpha,2}(0,T;X)$ is the (vector-valued) Sobolev-Slobodeckij space defined as the set of all functions $v\in L^2(0,T;X)$ such that 
\begin{equation}
\label{eq:seminorm_Walpha}
[v]_{W^{\alpha,2}(0,T;X)}:=
\Big(\int_0^T\int_0^{T} \frac{\|v(t)-v(s)\|_X^2}{|t-s|^{1+2\alpha}}\,\dd t \,\dd s\Big)^{1/2}<\infty,
\end{equation}
endowed with the natural norm, see e.g., \cite[Section 2.5.d]{Analysis1}.
Of course, the threshold $\alpha<\frac{1}{2}$ is due to the pathwise regularity of Brownian motion. 

Establishing the validity of \eqref{eq:alpha_regularity_statement} is closely linked to the conjectured time-regularity for stochastic evolution equations in the monotone setting, as proposed by Breit and Hofmanov\'a \cite{BH16} (see also 
\cite[Subsection 1.4]{WichThesis}), motivated by numerical applications \cite{BHL21_numerics}. 
By using different techniques and additional assumptions, the time regularity result used in the latter work was proven in \cite{W23_siam}. In the current notes, we prove that such additional assumptions are also necessary; see the comments below Theorem \ref{thm:main2}.
The relationship between our framework and that considered in the aforementioned work is discussed in Subsection \ref{ss:variational_comments} below.

\smallskip

The main result of this manuscript shows that \eqref{eq:alpha_regularity_statement} does not hold for all $\alpha>0$. 

\begin{theorem}[Absence of temporal regularity persistency in parabolic SPDEs]
\label{thm:main}
Let $A$ be a positive, invertible, and (possibly unbounded) self-adjoint operator on a Hilbert space $X$. Assume that $g(t)\equiv x$ for some $x\in X$. Then, for all $\alpha\in [0,\frac{1}{2})$, 
$$
u\in L^2(\Omega;W^{\alpha,2}(0,T;\Do(A^{1/2})))\ \ \ \text{ if and only if }\ \ \  x\in \Do(A^{\alpha}).
$$
Moreover, the following norm equivalence holds
$$
\big(\E\|u\|_{W^{\alpha,2}(0,T;\Do(A^{1/2}))}^2\big)^{1/2}\eqsim_{\alpha,A,T} \|x\|_{\Do(A^{\alpha})}.
$$ 
In particular, \eqref{eq:alpha_regularity_statement} does not hold for any $\alpha>0$. 
\end{theorem}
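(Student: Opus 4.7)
The plan is to use the explicit mild-solution formula $u(t) = \int_0^t e^{-(t-r)A}x \,\dd\W(r)$ together with spectral calculus for $A$ to reduce everything to a scalar estimate. Let $\mu = \mu_x$ denote the scalar spectral measure associated with $A$ and $x$, so that $\|\psi(A)x\|_X^2 = \int_{\sigma(A)}|\psi(\lambda)|^2\,\dd\mu(\lambda)$ for every Borel $\psi$. Since $A$ is invertible, $\sigma(A)\subset [c,\infty)$ for some $c>0$, and $\|x\|_{\Do(A^\alpha)}^2 \eqsim \int_{\sigma(A)}\lambda^{2\alpha}\,\dd\mu(\lambda)$. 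The theorem thus reduces to proving $\E\|u\|_{W^{\alpha,2}(0,T;\Do(A^{1/2}))}^2 \eqsim \int_{\sigma(A)} \lambda^{2\alpha}\,\dd\mu(\lambda)$.

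For the Slobodeckij seminorm I apply the It\^o isometry in the Hilbert space $\Do(A^{1/2})$. For $0\leq s<t\leq T$, split
$$u(t)-u(s) = \int_0^s \bigl(e^{-(t-r)A}-e^{-(s-r)A}\bigr)x\,\dd\W(r) + \int_s^t e^{-(t-r)A}x\,\dd\W(r),$$
whose two components are orthogonal in $L^2(\O;\Do(A^{1/2}))$. Spectral calculus then yields, with $h=t-s$,
$$\E\|u(t)-u(s)\|_{\Do(A^{1/2})}^2 = \tfrac12\int_{\sigma(A)}\Bigl[(1-e^{-h\lambda})^2(1-e^{-2s\lambda})+(1-e^{-2h\lambda})\Bigr]\dd\mu(\lambda).$$
Substituting into \eqref{eq:seminorm_Walpha} and applying Fubini, I obtain $\E[u]_{W^{\alpha,2}(0,T;\Do(A^{1/2}))}^2 = \int_{\sigma(A)} K(\lambda)\,\dd\mu(\lambda)$ for an explicit nonnegative kernel $K(\lambda)$. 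The analogous computation for the $L^2$-in-time part gives $\E\|u\|_{L^2(0,T;\Do(A^{1/2}))}^2 = \tfrac12\int_{\sigma(A)} \bigl(T-\frac{1-e^{-2T\lambda}}{2\lambda}\bigr)\,\dd\mu(\lambda)\eqsim \|x\|_X^2$, the lower bound being a direct consequence of invertibility of $A$ (the factor is monotone in $\lambda$ and bounded away from $0$ on $[c,\infty)$).

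The crux is the pointwise two-sided bound $K(\lambda)\eqsim_{\alpha,T,c}\lambda^{2\alpha}$, uniformly for $\lambda\geq c$. For the upper bound I use $(1-e^{-h\lambda})^2\leq 1-e^{-2h\lambda}$ to reduce both summands to the same shape, then substitute $u=2h\lambda$ to arrive at $K(\lambda)\lesssim \lambda^{2\alpha}\int_0^\infty \frac{1-e^{-u}}{u^{1+2\alpha}}\,\dd u$, the last integral being a finite positive constant precisely for $\alpha\in(0,\tfrac12)$. For the lower bound I drop the (nonnegative) semigroup-difference summand, restrict $h\in(0,T/2)$, and perform the same substitution: since $\lambda\geq c$, the range of the $u$-integral contains $[0,Tc]$, so the remaining integral is bounded below by a positive constant independent of $\lambda$, giving $K(\lambda)\gtrsim \lambda^{2\alpha}$. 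Combining with the $L^2$ estimate and integrating against $\dd\mu$ delivers the claimed equivalence.

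The main obstacle is exactly this uniform lower bound on $K(\lambda)$: $K$ is a sum of two structurally different integrals, and the scaling argument must remain valid at the spectral edge $\lambda=c$, not just asymptotically as $\lambda\to\infty$. The invertibility hypothesis on $A$ is essential here, because for $\lambda$ near $0$ one has $K(\lambda)=O(\lambda)$, which would break the equivalence with $\lambda^{2\alpha}$ for $\alpha\in(0,\tfrac12)$ if arbitrarily small $\lambda$ lay in $\sigma(A)$.
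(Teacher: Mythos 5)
Your proposal is correct. The core mechanism of the lower bound coincides with the paper's: split $u(t)-u(s)$ into two orthogonal stochastic integrals, discard the one over $[0,s]$, and recognize that $\int \frac{1-e^{-2h\lambda}}{h^{1+2\alpha}}\,\dd h$ scales like $\lambda^{2\alpha}$ (the paper phrases this via Komatsu's formula for $A^{2\alpha}$ applied to $x\in\Do(A)$, rather than via your scalar substitution under the spectral measure). However, your overall architecture is genuinely different and buys real simplifications. First, you obtain the upper bound by hand from the exact kernel identity $\E[u]_{W^{\alpha,2}}^2=\int K(\lambda)\,\dd\mu(\lambda)$ together with the elementary inequality $(1-e^{-h\lambda})^2\leq 1-e^{-2h\lambda}$, whereas the paper imports the forward implication from stochastic maximal regularity \eqref{eq:SMR} and \cite{MaximalLpregularity}; your route makes the whole theorem self-contained. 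Second, since every exchange of integrals in your computation involves only nonnegative quantities, Tonelli gives the identity in $[0,\infty]$ for \emph{every} $x\in X$, so the two-sided pointwise bound $K(\lambda)\eqsim_{\alpha,T,c}\lambda^{2\alpha}$ on $\sigma(A)\subset[c,\infty)$ yields both implications and the norm equivalence at once; the paper instead proves its lower bound only for $x\in\Do(A)$ and then needs the approximation $x_n=n(n+A)^{-1}x$ together with closedness of $A^{\alpha}$ to conclude. Third, your lower bound truncates the substituted integral at $T\lambda\geq Tc$ and invokes invertibility there directly, while the paper extends to infinity, subtracts a tail term $\theta_{\alpha,T}\|x\|_X^2$, and absorbs it using the $L^2$-in-time lower bound and a small parameter $\varepsilon$; both work, and your remark that $K(\lambda)=O(\lambda)$ near $\lambda=0$ correctly isolates why invertibility is essential. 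The only points left implicit (strong measurability of $u$ as a $W^{\alpha,2}(0,T;\Do(A^{1/2}))$-valued variable, and the degenerate case $\alpha=0$ where only the $L^2$-in-time computation is relevant) are treated at essentially the same level of detail in the paper.
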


The above result, in particular, shows that the conjectured result in \cite[Theorem 1.1]{BH16} cannot be valid under the assumptions stated therein; see Theorem \ref{thm:main2} and the comments below it.

Before going further, let us give some examples for which the above applies. Below we denote by $[\cdot,\cdot]_\beta$ the complex interpolation, see e.g., \cite{BeLo} or \cite[Appendix C]{Analysis1}. 
Of course, many more examples of operators satisfying the assumptions of Theorem \ref{thm:main} are possible, see Subsection \ref{ss:variational_comments} below. 

\begin{example}
\label{ex:operators}
Standard examples of operators $A$ to which Theorem \ref{thm:main} applies are:
\begin{enumerate}[{\rm(1)}]
\item\label{it:strong1} The (strong) shifted Laplacian $\mathrm{Id}-\Delta $ on $L^2(\R^d)$ with domain $W^{2,2}(\R^d)$.
\item\label{it:strong2} The (strong) Dirichlet Laplacian $-\Delta$ on $L^2(\Dom)$ with domain $W^{2,2}(\Dom)\cap W^{1,2}_0(\Dom)$, where $\Dom$ is a bounded $C^2$-domain in $\R^d$.
\end{enumerate}

In the above, $d\geq 1$ is a natural number. Note that in the above situations, the stochastic evolution equation \eqref{eq:SEE} corresponds to a stochastic heat equation with additive noise. 
Before going further, it is worth recalling that the above operators have the so-called \emph{bounded imaginary power} property (often abbreviated as BIP, see \cite[Definition 15.3.4]{Analysis3}). Due to the self-adjointness of the above operators, the BIP property follows again from \cite[Proposition 10.2.23]{Analysis2}. 
For our purposes, it is enough to recall that if the $A$ operator on $X$ has the BIP property, then the domain of the fractional powers can be identified via complex interpolation (see e.g., \cite[Theorem 15.3.9]{Analysis3}):
\begin{equation}
\label{eq:identification_fractional_BIP}
\Do(A^\theta)=[X,\Do(A)]_{\theta} \quad \text{ for all }\theta\in (0,1).
\end{equation}
The above is convenient, as complex interpolation spaces can often be computed explicitly.

For the above cases, Theorem \ref{thm:main} implies the following (see Subsection \ref{ss:notation} below for the unexplained notation).
\begin{enumerate}[{\rm(1$^\prime$)}]
\item $u\in L^2(\Omega;W^{\alpha,2}(0,T;W^{1,2}(\R^d)))$ if and only if $x\in W^{2\alpha,2}(\R^d)$.
\item For $\alpha\neq \frac{1}{4}$, we have $u\in L^2(\Omega;W^{\alpha,2}(0,T;W^{1,2}_0(\Dom)))$ if and only if 
$$
x\in 
\left\{
\begin{aligned}
&W^{2\alpha,2}(\Dom) & \text{ for }& \alpha<\tfrac{1}{4},\\
&W_0^{2\alpha,2}(\Dom) & \text{ for }& \alpha\in (\tfrac{1}{4},\tfrac{1}{2}).
\end{aligned}
\right.
$$ 
In the above, we used that \eqref{eq:identification_fractional_BIP} and \cite{Se} yield
$$
[L^2(\Dom), W^{2,2}(\Dom)\cap W^{1,2}_0(\Dom)]_{\alpha}
=
\left\{
\begin{aligned}
&W^{2\alpha,2}(\Dom) & \text{ for }& \alpha<\tfrac{1}{4},\\
&W_0^{2\alpha,2}(\Dom) & \text{ for }& \alpha\in (\tfrac{1}{4},\tfrac{1}{2});
\end{aligned}
\right.
$$
where $W^{\beta,2}(\Dom)$ denotes the Bessel potential spaces on $\Dom$, and $W^{\beta,2}_0(\Dom)$ subspace of $W^{\beta,2}(\Dom)$ such that $u|_{\partial\Dom}=0$ with $\beta\in (\frac{1}{2},\frac{3}{2})$. In particular, the exceptional case $\alpha\neq \frac{1}{4}$ is due to the application of the results in \cite{Se}. However, Theorem \ref{thm:main} still ensures the equivalence between $u\in L^2(\Omega;W^{\alpha,2}(0,T;W^{1,2}_0(\Dom)))$ and $x\in 
[L^2(\Dom), W^{2,2}(\Dom)\cap W^{1,2}_0(\Dom)]_{1/4}$, but we are not able to compute explicitly the latter.
\end{enumerate}
\end{example}

Before going further, let us mention that in many situations the solution to \eqref{eq:SEE} is known to satisfy mixed space-time regularity results. For instance, from \cite{MaximalLpregularity} it follows that for a large class of operators $A$ (including the positive, invertible, and self-adjoint operator on a Hilbert space \cite[Proposition 10.2.23]{Analysis2}), for progressively measurable processes $g$, one has 
\begin{align}\label{eq:SMR}
g\in L^2(\Omega;L^2(0,T;X))\quad \Longrightarrow\quad  
u\in L^2(\Omega;W^{\alpha,2}(0,T;\Do(A^{1/2-\alpha}))) \  \text{ for all } \alpha\in [0,\tfrac12).
\end{align}
Moreover, in the latter paper, appropriate versions of the above are proven in an $L^p$-setting. For further details on the $L^p$-setting, the reader is referred to \cite[Section 3]{AV25_survey} and the references therein.

\subsection{Counterexample of time regularity in the variational setting}
\label{ss:variational_comments}
Before discussing the implications of Theorem \ref{thm:main} for the conjectured result in \cite[Theorem 1.1]{BH16}, we briefly comment on the variational setting for SPDEs. For a more detailed exposition, the reader is referred to, e.g., \cite{AVvar,KR79,LR15}. Let $V$ and $H$ be Hilbert spaces such that $V\embed H$ densely. Then identifying $H$ with its dual $H'$, it follows that the pairing $\langle u,v\rangle_{V,V'}=(u,v)_{H}$ for $v\in V$ and $u\in H$ uniquely induces an embedding $H\embed V'$ (in other words, $(V,H,V')$ forms a Gelfand triple). 
The variational setting for SPDEs, in particular, includes equations of the form
\begin{equation}
\label{eq:SPDE_variational}
\left\{
\begin{aligned}
&\dd u + \A u \, \dd t = \B(u(t))\,\dd \W(t) \quad \text{ on }[0,T], \\ 
&u(0)=u_0,
\end{aligned}
\right.
\end{equation}
where $\A:V\to V'$ is a positive invertible symmetric variational generator (see e.g., \cite[Subsection A.4.2]{DPZ}), i.e., 
there exist a continuous bilinear form $a:V\times V\to \R$ and a constant $\alpha>0$ such that 
\begin{itemize}
\item $\langle \A u,v\rangle =- a(u,v)$ for all $u,v\in V$.
\item $-a(v,v) \geq  \alpha \|v\|_V^2$ for all $v\in V$.
\item $a(u,v)=a(v,u)$ for all $u,v\in V$.
\end{itemize}
Moreover, the variational setting for SPDEs typically allows for nonlinear diffusions $B$ such that, for all $v,w\in H$,
\begin{equation}
\label{eq:mapping_property_B}
\B(v)\in H\quad \text{ and }\quad 
\|\B(v)- \B (w)\|_{H}\leq N (1+\|v-w\|_H).
\end{equation}
Of course, the assumptions above can be further generalized (see, for example, \cite{AVvar, KR79, LR15}); however, the latter set of assumptions falls within the framework of \cite{BH16}, within which we aim to discuss the implications of our results. 
The important point here is that no further mapping properties on $\B$ are required in \cite{BH16}.
In the previous reference, the authors conjectured  
\begin{align}\label{eq:betterreg}
\tag{C}
u\in L^2(\Omega;W^{\alpha,2}(0,T;V)), \ \ \alpha\in [0,\tfrac12).
\end{align}
The following shows that \eqref{eq:betterreg} cannot hold in a special case that satisfies the above conditions. 

\begin{theorem}\label{thm:main2}
Let $V\embed H\embed V'$ as above, and let $\A:V\to V'$ be a positive invertible symmetric variational generator. Fix $\alpha\in [0,\frac{1}{2})$. Assume that $B(v)\equiv h$ for some $h\in H$.  
Then one has 
$$
u\in L^2(\Omega;W^{\alpha,2}(0,T;V))
\ \ \ \text{ if and only if }\ \ \  h\in [H,V]_{2\alpha} \ \text{\normalfont{(complex interpolation)}}.
$$ 
Moreover, the norm equivalence holds $\big(\E\|u\|_{W^{\alpha,2}(0,T;V)}^2\big)^{1/2}\eqsim_{\alpha,\A,T} \|h\|_{[H,V]_{2\alpha}}$.
\end{theorem}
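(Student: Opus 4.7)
The plan is to reduce Theorem \ref{thm:main2} to Theorem \ref{thm:main} by realising $\mathcal{A}$ as minus a positive invertible self-adjoint operator $A$ on the pivot space $X:=H$, and then translating the membership $h\in \Do(A^\alpha)$ provided by Theorem \ref{thm:main} into the complex-interpolation scale on the Gelfand triple.

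\textbf{Step 1 (Friedrichs realisation).} Since $-a:V\times V\to\R$ is continuous, symmetric and coercive, the Lax--Milgram / Friedrichs construction applied to $V\embed H\embed V'$ produces a positive invertible self-adjoint operator $A$ on $H$ with
$$
\Do(A)=\{v\in V:\mathcal{A}v\in H\},\qquad Av=-\mathcal{A}v\quad\text{for } v\in\Do(A),
$$
and, crucially, $\Do(A^{1/2})=V$ with equivalent norms, verifying $\|A^{1/2}v\|_H^2=-a(v,v)$ for all $v\in V$. Thus $A$ meets the hypotheses of Theorem \ref{thm:main} on $X=H$.

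\textbf{Step 2 (Identification of the two solution concepts).} Because $B(v)\equiv h\in H$, the variational solution to \eqref{eq:SPDE_variational} (with the natural choice $u_0=0$) coincides a.s.\ with the stochastic convolution solving \eqref{eq:SEE} with $A$ from Step 1 and $g(t)\equiv h$. This is the standard identification between variational and semigroup solutions for linear additive-noise equations and follows from the uniqueness of both frameworks (see e.g.\ \cite{DPZ,LR15}). Theorem \ref{thm:main} therefore gives
$$
u\in L^2(\O;W^{\alpha,2}(0,T;V))\quad\Longleftrightarrow\quad h\in\Do(A^\alpha),
$$
together with the norm equivalence $\bigl(\E\|u\|_{W^{\alpha,2}(0,T;V)}^2\bigr)^{1/2}\eqsim \|h\|_{\Do(A^\alpha)}$.

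\textbf{Step 3 (Interpolation identification).} It remains to show $\Do(A^\alpha)=[H,V]_{2\alpha}$ with equivalent norms. The operator $A^{1/2}$ is again positive invertible and self-adjoint on $H$, and hence has the BIP property by \cite[Proposition 10.2.23]{Analysis2}. Applying \eqref{eq:identification_fractional_BIP} to $A^{1/2}$ with exponent $2\alpha\in(0,1)$, together with $\Do(A^{1/2})=V$ from Step 1, yields
$$
[H,V]_{2\alpha}=[H,\Do(A^{1/2})]_{2\alpha}=\Do\bigl((A^{1/2})^{2\alpha}\bigr)=\Do(A^\alpha).
$$
Combining this with Step 2 proves Theorem \ref{thm:main2}.

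The principal obstacle is not analytical but bookkeeping, namely carefully matching the variational solution of \cite{BH16} with the mild/semigroup solution underlying Theorem \ref{thm:main}. In the present linear, additive-noise, constant-$B$ setting this identification is entirely standard, and once it is in place the remainder is a direct application of the spectral calculus (via BIP) together with Theorem \ref{thm:main}.
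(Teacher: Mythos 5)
Your proposal is correct and follows essentially the same route as the paper: realise $\A$ as a positive invertible self-adjoint operator $A$ on $H$ with $\Do(A^{1/2})=V$, identify the variational solution with the stochastic convolution, apply Theorem \ref{thm:main}, and convert $\Do(A^\alpha)$ into $[H,V]_{2\alpha}$. The only cosmetic difference is in the last step, where the paper uses \eqref{eq:identification_fractional_BIP} for $A$ followed by reiteration of complex interpolation, while you apply the same identification directly to $A^{1/2}$ together with $(A^{1/2})^{2\alpha}=A^\alpha$; the two arguments are interchangeable.
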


In particular, the previous result indicates that, in general, one cannot expect time smoothness in the variational setting unless the diffusion operator $\B$ is spatially sufficiently smooth. 
As Example \ref{ex:2} below shows, in some situations and for some value of $\alpha$, time smoothness also requires boundary conditions for $\B$ at the boundary.
Consequently, if $
u\in L^2(\Omega;W^{\alpha,2}(0,T;V))$, then the conditions $(i)$--$(ii)$ at the end of \cite[p.\ 34]{BH16} cannot, in general, be met. 
In the works \cite{W23_siam,W24_stokes}, sharp time-regularity results for (variants of) the stochastic $p$-Laplacian were proven by employing additional regularity assumptions and boundary conditions on the diffusion $\B$. Theorem \ref{thm:main2} ensures that such additional conditions are also necessary for such a result to hold.

\smallskip

In the following, we elaborate on the consequences of Theorem \ref{thm:main2} for the so-called weak Laplace operator, which is a standard example in the theory of variational SPDEs.

\begin{example}
\label{ex:2}
Let $V=W^{1,2}_0(\Dom)$ and $H=L^2(\Dom)$, where $\Dom$ is a bounded Lipschitz domain. It is clear that $V\embed L^2$ and the natural pairing $\langle v,w\rangle =(v,w)_{L^2(\Dom)}=\int_{\Dom} f g \, \dd x $ yields the identification $V'=W^{-1,2}(\Dom)$ (see e.g., \cite[Example 2.1]{AVvar}). In particular, the weak Dirichlet Laplacian can be defined as $\A_D:W^{1,2}_0(\Dom):V\to V'$ given as $\langle \A_D v,w\rangle = -\int_{\Dom} \nabla v \cdot \nabla w \,\dd x $ for all $v,w\in W^{1,2}_0(\Dom)$. Due to the Poincar\'e inequality, $\A_D$ is a symmetric variational generator. It follows from Theorem \ref{thm:main2} that for all $\alpha\in (0,\frac{1}{2})\setminus \frac{1}{4}$, the solution $u$ to \eqref{eq:SPDE_variational} with $\B(u)\equiv x$ satisfies 
$$
u\in L^2(\Omega;W^{\alpha,2}(0,T;W^{1,2}_0(\Dom))) \  \ \ \text{ if and only if } \ \ \ x\in
\left\{
\begin{aligned}
&W^{2\alpha,2}(\Dom), & \text{ for }& \alpha<\tfrac{1}{4},\\
&W_0^{2\alpha,2}(\Dom), & \text{ for }& \alpha\in (\tfrac{1}{4},\tfrac{1}{2}).
\end{aligned}
\right.
$$
As in Example \ref{ex:operators}, the exceptional case $\alpha\neq \frac{1}{4}$ is due to the application of the results in \cite{Se} to compute the complex interpolation $[H,V]_{2\alpha}$, and not a limitation of Theorem \ref{thm:main2}.
\end{example}

\begin{remark}[The case of nonlinear diffusion]
Fix $\alpha\in (0,\frac{1}{2})$. Using a standard bootstrap argument, it is possible to show that the unique solution $u\in L^2(\Omega;L^2(0,T;V))$ to \eqref{eq:SPDE_variational} satisfies $u\in L^2(\Omega;W^{\alpha,2}(0,T;V))
$ if in addition to \eqref{eq:mapping_property_B} one assumes
\begin{equation*}
\B(v)\in [H,V]_{2\alpha} \quad \text{ and }\quad 
\|\B (v)\|_{[H,V]_{2\alpha}}\leq C( 1+\|v\|_{[H,V]_{2\alpha}})
\end{equation*}
for all $v\in [H,V]_{2\alpha}$, where $C>0$ is a constant independent of $v$.
\end{remark}

\subsection{Notation}
\label{ss:notation}
In this subsection, we collect some basic notation, with additional conventions introduced
as needed later in the text.
We write $A \lesssim_{p_1,\dots,p_n} B$ or $A \gtrsim_{p_1,\dots,p_n} B$ in case there exists a constant $C$ depending only on the parameters $p_1,\dots,p_n$ such that $A\leq C B$ or $A\geq C B$. 

\smallskip

\noindent\emph{Probabilistic set-up.} $\W=(\W(t))_{t\geq 0}$ denotes a one-dimensional Brownian motion on a filtered probability space $(\Omega,\A,(\F_t),\mathbf{P})$. Moreover, $\E[X]=\int_{\Omega} X \, \dd \mathbf{P}$ denotes the expectation of a random variable $X$.

\smallskip

\noindent\emph{Function spaces.} For a domain $\Dom\subseteq \R^d$ (note that $\Dom=\R^d$ is allowed), $W^{k,q}(\Dom)$ for  $k\in \N$ and $q\in [1,\infty]$ denote the usual Sobolev spaces. For $\theta\in (0,1)$, we let $W^{\theta,2}(\Dom)$ denote the Sobolev-Slobodeckij space, i.e., the set of $f\in L^2(\Dom)$ such that 
\begin{equation}
\label{eq:fractional_Sobolev_seminorms_Rd}
[f]_{W^{\theta,2}(\Dom)}:=\Big(\int_{\Dom}\int_{\Dom}\frac{|f(x)-f(y)|^2}{|x-y|^{d+2\theta}}\,\dd x \, \dd y\Big)^{1/2}<\infty;
\end{equation}
endowed with the norm 
$$
\|f\|_{W^{\theta,2}(\Dom)}:=
\|f\|_{L^{2}(\Dom)}+
[f]_{W^{\theta,2}(\Dom)}.
$$
We emphasize that the formulation via \eqref{eq:fractional_Sobolev_seminorms_Rd} is not proper in the extreme cases $\theta\in\{ 0,1\}$. For instance, if $\theta=1$, then $W^{1,2}(\Dom)$ is the above-mentioned Sobolev spaces of functions in $f\in L^2(\Dom)$ such that $\nabla f\in W^{1,2}(\Dom;\R^d)$, where derivatives are understood in the distributional sense.

For $\theta\in (0,1)$, we denote by $W^{\theta,2}_0(\Dom)$ the closure of $C^{\infty}_0(\Dom)$ in $W^{\theta,2}(\Dom)$. It is well-known that, for $\theta\in (0,\frac{1}{2})$, the spaces are equivalent $W^{\theta,2}(\Dom)=W^{\theta,2}_0(\Dom)$; while for $\theta\in (\frac{1}{2},\frac{3}{2})$, it holds that $W^{\theta,2}_0(\Dom)$ is the subset of $f\in W^{\theta,2}(\Dom)$ having zero trace on $\partial\Dom$ in the Sobolev sense (see \cite{TayPDE1} or \cite{Tri83} and references therein for further details). Taking into account higher-order traces, analogous results are also available in the case $\theta > \frac{3}{2}$; however, these will not be needed here. 

Finally, for details on vector-valued Sobolev spaces such as the one introduced in \eqref{eq:seminorm_Walpha}, the reader is referred to \cite[Chapter 14]{Analysis3}.

\section{Proof of Theorems \ref{thm:main} and \ref{thm:main2}}
In this section, we prove the main results of this manuscript. We start by proving Theorem \ref{thm:main}.

\subsection{Proof of Theorem \ref{thm:main}}
Let us begin by recalling that under the assumptions of Theorem \ref{thm:main}, the solution to \eqref{eq:SEE} is given by the stochastic convolution
\begin{equation}
\label{eq:stochastic_convolution}
u(t)=\int_0^t e^{-(t-r)A} x \,\dd \W(r),
\end{equation}
where $(e^{-tA})_{t\geq 0}$ denotes the semigroup generated by the operator $-A$.

\begin{proof}[Proof of Theorem \ref{thm:main}]
We begin by noticing $x\in \Do(A^\alpha)$ implies $u\in L^2(\Omega;W^{\alpha,2}(0,T;\Do(A^{1/2})))$ follows from  \eqref{eq:SMR} applied to $g_{\alpha} = A^{\alpha} x$. Indeed, from \eqref{eq:SMR} it follows that 
$$
A^{\frac12} u(t) = A^{\frac12 -\alpha} \int_0^t e^{-(t-r)A} g_{\alpha}(r) \,\dd \W(r)\in L^2(\Omega;W^{\alpha,2}(0,T;X)).
$$  
From the invertibility of $A$, we conclude that $u\in L^2(\Omega;W^{\alpha,2}(0,T;\Do(A^{1/2})))$, and the results of \cite{MaximalLpregularity} also yield the estimate 
$$
\E \|u\|_{W^{\alpha,2}(0,T;\Do(A^{1/2}))}^2 \lesssim_{\alpha,T} \|x\|^2_{\Do(A^{\alpha})}
$$
provided $x\in \Do(A^{\alpha})$. It remains to prove the opposite direction, that is, the main contribution of these notes. In the rest of the proof, we focus on the case $\alpha\in (0,\frac{1}{2})$, which will be fixed below. If $\alpha=0$, then one can use the argument in \eqref{eq:estimate_A12_proof_T} below.  
For the missing implication in Theorem \ref{thm:main}, it is enough to show the following claim.

\smallskip

\emph{Claim: There are constants $c_{\alpha},\theta_{\alpha,T}>0$ such that, for all $x\in \Do(A)$,}
\begin{align}\label{eq:toprove}
\E [u]_{W^{\alpha,2}(0,T;\Do(A^{1/2}))}^2 \geq T c_{\alpha} \|A^{\alpha} x\|^2_{X} - T \theta_{\alpha,T}\|x\|_X^2,
\end{align}
\emph{where $u$ is given via the stochastic convolution \eqref{eq:stochastic_convolution}, and $\lim_{T\to \infty} \theta_{\alpha,T}=0$.}

\smallskip

Before proving the claimed estimate \eqref{eq:toprove}, we first prove that it implies the remaining implication in Theorem \ref{thm:main}. To this end, it is enough to show that the inequality \eqref{eq:toprove} extends to all $x\in\Do(A^\alpha)$. This will be achieved via approximation, see \cite[Proposition 10.1.7]{Analysis2}. To this end, suppose that 
$$u\in L^2(\Omega;W^{\alpha,2}(0,T;\Do(A^{1/2}))).
$$ 
Let $u_n$ be the solution with $x$ replaced by $x_n =n(n+A)^{-1}x$. From \eqref{eq:stochastic_convolution}, it follows that $u_n = n(n+A)^{-1} u$. Since $-A$ is a semigroup generator, it follows that 
$$
\sup_{n\geq 1}\|n(n+A)^{-1}\|_{X\to X}<\infty \qquad \text{ and }\qquad n(n+A)^{-1}y\to y \  \text{ for each }y\in X,
$$ 
see e.g., \cite[Example 10.1.2 and Proposition 10.1.7(1)]{Analysis2}.
In particular, the previous implies $u_n\to u$ in $L^2(\Omega;W^{\alpha,2}(0,T;\Do(A^{\frac12-\alpha})))$ via the dominated convergence theorem and the definition of the Sobolev-Slobodeckij seminorm \eqref{eq:seminorm_Walpha}. Applying \eqref{eq:toprove} to the differences $u_n-u_m$ and $x_n-x_m$ and letting $m,n\to \infty$ gives that the sequence $(A^{\alpha} x_n)_{n\geq 1}$ is Cauchy, and hence convergent. Since $x_n\to x$, the closedness of $A^{\alpha}$ implies that $x\in \Do(A^{\alpha})$ and moreover, \eqref{eq:toprove} extends to $x\in \Do(A^{\alpha})$.

\smallskip
Now, we turn to the proof of \eqref{eq:toprove}.  
Note that for $t>s$, one can write
\begin{align*}
A^{1/2}(u(t) - u(s)) 
&= \int_0^s A^{1/2}(e^{-(t-r)A} - e^{-(s-r)A}) x \, \dd \W(r) \\
&+ \int_s^t A^{1/2} e^{-(t-r)A} x \, \dd \W(r). 
\end{align*}
From the orthogonality in $L^2(\Omega;X)$ of the two terms on the right-hand side of the above, we have 
\begin{align*}
\E\|A^{1/2} (u(t) - u(s))\|_X^2 & \geq \E \Big\|\int_s^t A^{1/2} e^{-(t-r)A} x\, \dd \W(r)\Big\|_X^2 
\\ & = \int_s^t \|A^{1/2} e^{-(t-r)A} x\|_X^2 \,\dd r & \text{(It\^o isometry)}
\\ & = \int_s^t (A e^{-2(t-r)A} x,x)_X \,\dd  r & \text{(self-adjointness)}
\\ & = \int_s^t \frac{1}{2} \Big(\frac{\dd}{\dd r} e^{-2(t-r)A} x,x\Big)_X \,\dd  r &
\\ & =  \frac12 (x - e^{-2(t-s)A}x,x)_X & \text{(fundamental theorem of calculus)}.
\end{align*}
where $(\cdot,\cdot)_X$ denotes the scalar product in $X$. In particular, it holds that $(x - e^{-2(t-s)A}x,x)_X\geq 0$. 

From \eqref{eq:seminorm_Walpha} and the symmetry of the integrand, we have
\begin{align}
\nonumber
\E [u]_{W^{\alpha,2}(0,T;\Do(A^{1/2}))}^2& 
\geq \int_0^T \int_0^t \frac{(x - e^{-2(t-s)A}x,x)_X}{(t-s)^{2\alpha+1}}\, \dd s \,\dd t 
\\
\nonumber
 & =   \int_0^T \int_0^t \frac{(x - e^{-2sA}x,x)_X}{s^{2\alpha+1}} \,\dd s \, \dd t  
\\ \nonumber
& \stackrel{(i)}{\geq} \frac{T}{2} \int_0^{T/2}  \frac{(x - e^{-2sA}x,x)_X}{s^{2\alpha+1}} \,\dd s,
\\ 
\label{eq:final_estimate_lower_bound}
& = \frac{T}{2} \int_0^{\infty}  \frac{(x - e^{-2sA}x,x)_X}{s^{2\alpha+1}}\, \dd s  - 
 \frac{T}{2} \int_{T/2}^\infty  \frac{(x - e^{-2sA}x,x)_X}{s^{2\alpha+1}} \, \dd s,
\end{align} 
where in $(i)$ we used the positivity of the integrand.  
To proceed, recall that $e^{-tA}$ is contractive on $X$. Indeed, for $x\in \Do(A)$, we have $\frac{\dd }{\dd t }\|e^{-tA} x\|_{X}^2 =-2(Ae^{-tA}x,e^{-tA}x)_X\leq 0$ by the self-adjointness and the positivity of $A$. Integrating the latter identity, one obtains $\|e^{-tA}x\|_{X}\leq \|x\|_X$ for all $x\in\Do(A)\subseteq X$. By density, the previous yields contractivity of the semigroup $e^{-tA}$. 
For the second term on the right-hand side of \eqref{eq:final_estimate_lower_bound},
\[ \int_{T/2}^\infty  \frac{(x - e^{-2sA}x,x)_X}{s^{2\alpha+1}} \,\dd s\leq \theta_{\alpha,T}\|x\|_{X}^2,\]
where $\theta_{\alpha,T}\to 0$ as $T\to \infty$. To estimate the first order term in \eqref{eq:final_estimate_lower_bound}, we use the following consequence of Komatsu's formula \cite[Theorem 6.1.6]{MarSanz} for the fractional powers:
$$
\int_0^{\infty}  \frac{x - e^{-sA}x}{s^{\beta+1}}\, \dd s
= K_\beta\, A^\beta x \quad \text{ for all } \ \beta\in (0,1)\  \text{ and }\  x\in \Do(A),
$$
where $K_\beta := \int_0^{\infty}  \frac{1 - e^{-s}}{s^{\beta+1}} \, \dd s$. Note that the above integral is a well-defined Bochner integral in $X$, as $x\in \Do(A)$ and $x - e^{-sA}x=\int_0^s A e^{-rA}x\,\dd r$. Thus, for $x\in \Do(A)$ and $\alpha<\frac{1}{2}$, we find
\begin{align*}
 \int_0^{\infty}  \frac{(x - e^{-2sA}x,x)_X}{s^{2\alpha+1}}\, \dd s & = 2^{2\alpha} \Big(\int_0^{\infty}  \frac{x - e^{-sA}x}{s^{2\alpha+1}}\, \dd s,x\Big)_X  \\
&= 2^{2\alpha} K_{2\alpha}\, (A^{2\alpha}x, x)_X = c_{\alpha} \|A^{\alpha}x\|_X^2,
\end{align*}
where in the last step we used the self-adjointness of $A$ again.

It remains to derive the norm equivalence stated in the theorem. For this, it remains to prove the lower bound $\gtrsim$. We have already proved \eqref{eq:toprove}. Next we will prove a lower bound for $\E\|u\|_{L^2(0,T;\Do(A^{1/2}))}^2$. 
The It\^o isometry and the self-adjointness yield
\begin{align}
\label{eq:estimate_A12_proof_T}
\E\|A^{1/2}u\|_{L^2(0,T;X)}^2 & = \int_0^T \int_0^t \|A^{1/2}e^{-(t-r)A}x\|^2_X \, \dd r \, \dd t 
\\ 
\nonumber
& = \int_0^T \int_0^t (A e^{-2sA}x,x)_X \, \dd s \, \dd t 
\\ 
\nonumber
& = \frac{T}{2} \|x\|^2_X - \frac12 \int_0^T (e^{-2tA} x,x)_X \, \dd t
\\ 
\nonumber
& \geq  \Big(\frac{T}{2}- \frac{1 - e^{-2T\delta}}{4\delta}\Big) \|x\|^2_X= C_{\delta,T}\|x\|^2_X,
\end{align}
where $\delta>0$ satisfies $\|e^{-tA}\|_{X\to X}\leq e^{-\delta t}$, and  $C_{\delta,T}:=\frac{T}{2}- \frac{1 - e^{-2T\delta}}{4\delta}$ (note $C_{T,\delta}>0$
as $1-x<e^{-x}$ for $x>0$). Under our assumptions, the exponential decay of $\|e^{-tA}\|_{X\to X}$ is known to experts. However, for the reader's convenience, we include some details. Recall that $A$ is invertible by assumption, and thus $A^{-1/2}=(A^{-1})^{1/2}$ is bounded (for the fractional powers of bounded operators, see e.g.\ \cite[Chapter 4]{MarSanz}). Let $\delta=\frac{1}{2}\|A^{-1/2}\|_{X\to X}^2$. Note that $((A-\delta)x,x)_X= \|A^{1/2}x\|_{X}^2-\delta\|x\|_X^2\geq \frac{1}{2}\|A^{1/2}x\|_X^2 \geq 0$ for all $x\in \Do(A)$. Since the self-adjointness of $A-\delta$ follows from the one of $A$, the argument below \eqref{eq:final_estimate_lower_bound} applies to $A-\delta$ and yields $\|e^{-t(A-\delta)}\|_{X\to X}\leq 1$.

Therefore, for every $\varepsilon\in (0,1]$,
\begin{align*}
\E\|u\|_{W^{\alpha,2}(0,T;\Do(A^{1/2}))}^2 &= \E[u]_{W^{\alpha,2}(0,T;\Do(A^{1/2}))}^2 + \E\|u\|_{L^2(0,T;\Do(A^{1/2}))}^2 
\\ & \geq \varepsilon \E[u]_{W^{\alpha,2}(0,T;\Do(A^{1/2}))}^2 + \E\|u\|_{L^2(0,T;\Do(A^{1/2}))}^2 
\\ & \geq \varepsilon \big[T c_{\alpha} \|A^{\alpha} x\|^2_{X} - T \theta_{\alpha,T}\|x\|_X^2\big] + C_{\delta,T}\|x\|^2_X
\\ & = \varepsilon T c_{\alpha} \|A^{\alpha} x\|^2_{X}  + (C_{\delta,T} - \varepsilon T \theta_{\alpha,T}) \|x\|^2_X.
\end{align*}
Thus the desired result follows by choosing $\varepsilon = \min\{ \frac{C_{\delta,T}}{2T \theta_{\alpha,T}},1\}$. 
\end{proof}

\subsection{Proof of Theorem \ref{thm:main2}}
In this final subsection, we derive Theorem \ref{thm:main2} from Theorem \ref{thm:main}.

\begin{proof}[Proof of Theorem \ref{thm:main2}]
We begin by recalling that solutions to \eqref{eq:SPDE_variational} exist, and are unique in the class of progressively measurable processes with a.s.\ paths in $L^2(0,T;V)$, see e.g., \cite[Chapter 4]{LR15}. 

Let $\A$ be a positive invertible symmetric variational generator. The operator $\A$ naturally restricts on $H$, leading to an operator $A$, which is defined as 
\begin{itemize}
\item
$\Do(A) = \{v \in V \,:\, a(v,\cdot ) \text{ is continuous in the topology of }H\}.$
\item $ a(v,w)=( A v,w)_H$ for all $v\in \Do(A)$ and $w\in V$.
\end{itemize}
One can check that $A$ is a positive, invertible, and self-adjoint operator on $H$, see e.g., \cite[Proposition A.12]{DPZ}. 
Moreover, it follows from \cite[Proposition 1.10, Chapter 8]{TayPDE2} that 
\begin{equation}
\label{eq:square_root_property_A_variational}
\Do(A^{1/2})=V.
\end{equation}
Recall that $\B(u)\equiv h$ for some $h\in H$. 
Thus, from \eqref{eq:zero_regularity_statement} and the uniqueness of \eqref{eq:SPDE_variational} mentioned at the beginning of this proof, it follows that the unique  \eqref{eq:SPDE_variational} with a.s.\ paths in $L^2(0,T;V)$ is given by the stochastic convolution
$
u(t)=\int_0^t e^{-(t-s)A} h \,\dd \W(s)
$, and therefore is a solution to \eqref{eq:SEE} where $A$ is the induced operator induced by $\A$ on $H$. In particular, Theorem \ref{thm:main} and \eqref{eq:square_root_property_A_variational} ensures that, for all $\alpha\in[0,\frac{1}{2})$,
\begin{equation}
\label{eq:cor_main_proof_2}
u\in L^2(\Omega;W^{\alpha,2}(0,T;V))\ \ \ \text{ if and only if }\ \ \  h\in \Do(A^{\alpha}).
\end{equation}
To conclude, it remains to show that, for all $\alpha\in (0,\frac{1}{2})$,
\begin{equation}
\label{eq:identification_variational_A}
\Do(A^{\alpha})=[H,V]_{2\alpha}.
\end{equation}
As commented above \eqref{eq:identification_fractional_BIP}, as $A$ is self-adjoint, the identification \eqref{eq:identification_fractional_BIP} holds. Thus, from the reiteration of complex interpolation (see \cite[Theorem 4.6.1]{BeLo}), it follows that, for all $\alpha\in (0,\frac{1}{2})$, 
$$
\Do(A^\alpha)\stackrel{\eqref{eq:identification_fractional_BIP}}{=}[H, \Do(A)]_{\alpha}=[H, \Do(A^{1/2})]_{2\alpha}
\stackrel{\eqref{eq:square_root_property_A_variational}}{=}[H, V]_{2\alpha}.
$$
The claim of Theorem \ref{thm:main2} follows by combining the above with \eqref{eq:cor_main_proof_2}.
\end{proof}

\smallskip

\noindent \emph{Acknowledgements}. The authors thank the referee for useful comments and suggestions.

\medskip

\noindent {\bf Declaration of interests.} The authors do not work for, advise, own shares in, or receive funds from any organisation that could benefit from this article, and have declared no affiliation other than their research organisations.

\def\polhk#1{\setbox0=\hbox{#1}{\ooalign{\hidewidth
  \lower1.5ex\hbox{`}\hidewidth\crcr\unhbox0}}} \def\cprime{$'$}

\end{document}